\date{\scriptsize   Received: , Accepted: .}
\newtheorem{theorem}{Theorem}[section]
\newtheorem{proposition}[theorem]{Proposition}
\newtheorem{lemma}[theorem]{Lemma}
\theoremstyle{definition}
\newtheorem{definition}[theorem]{Definition}
\theoremstyle{remark}
 \newtheorem{claim}[theorem]{Claim}
\numberwithin{equation}{section}
 \newcommand{\PP}{{\mathbb{P}}}
 \DeclareMathOperator{\dom}{dom}
 \DeclareMathOperator{\ZFC}{ZFC}
 \def\k{\kappa}
 \def\rmark{\mbox{$\rm\bf\rule{0.06em}{1.45ex}\kern-0.05em R$}}
 \def\pmark{\mbox{$\rm\bf\rule{0.06em}{1.45ex}\kern-0.05em P$}}
 \def\nmark{\mbox{$\rm\bf\rule{0.06em}{1.45ex}\kern-0.05em N$}}
 \def\vdash{\mbox{$\rm\| \kern-0.13em  - $}}
 \newcommand{\lusim}[1]{\smash{\underset{\raisebox{1.2pt}[0cm][0cm]{$\sim$}}
 {{#1}}}}
\begin{document}


\title[On Shelah Cardinals]{On Some Properties of Shelah Cardinals}

\author[A. S. Daghighi]{Ali Sadegh Daghighi}
\address[Ali Sadegh Daghighi]{Department of Mathematics and Computer Science \\
              Amirkabir University of Technology\\
              Hafez avenue 15194, Tehran, Iran\\  }
\email{a.s.daghighi@gmail.com}

\author[M. Pourmahdian]{Massoud Pourmahdian $^*$}
\address[Massoud Pourmahdian]{Department of Mathematics and Computer Science \\
              Amirkabir University of Technology\\
              Hafez avenue 15194, Tehran, Iran\\ }
\email{pourmahd@ipm.ir}

  \thanks{$^*$Corresponding author}
%

 \maketitle
%

\begin{abstract}
We present several results concerning Shelah cardinals including the fact that small and fast function forcings preserve Shelah and $(^{\kappa}\kappa\cap V)$-Shelah cardinals respectively. Furthermore we prove that the Laver Diamond Principle holds for Shelah cardinals and use this fact to show that Shelah cardinals can be made indestructible under $\leq \kappa$-directed closed forcings of size $<wt(\kappa)$.\\
\textbf{Keywords:} Shelah cardinal, small forcing, fast function forcing, Laver diamond principle.  \\
\textbf{MSC(2010):}  Primary: 03E55; Secondary: 03E05.
\end{abstract}

\section{\bf Introduction}

Shelah cardinals were originally introduced by Shelah \cite{shelah-woodin} in connection with some problems in measure theory and descriptive set theory. An uncountable cardinal $\k$ is called Shelah, if for every function $f:\k \rightarrow \k,$ there exists an elementary embedding $j :  V \rightarrow M$ with $crit(j)=\k$ such that $^{\k}M \subseteq M$ and $V_{j(f)(\k)} \subseteq M$. Later Gitik and Shelah \cite{Gitik-Shelah} introduced the generalized notion of an $A$-Shelah cardinal for a set $A\subseteq$ $^{\kappa}\kappa$ (see definition \ref{A-shelah cardinal}). Shelah cardinals lie between Woodin and supercompact cardinals in the consistency strength hierarchy while the consistency strength of $A$-Shelah cardinals varies depending on $A$.

Investigating preservation of large cardinals under various classes of forcing notions started by the work of Levy and Solovay \cite{levy-solovay}, who showed that measurable cardinals are preserved under small forcing notions.  Later works (see for example \cite{jech},  \cite{hamkins1},  \cite{hamkins2}, \cite{hamkins-woodin}) revealed the fact that besides measurable cardinals, the same result holds for a wide range of large cardinals as well. In section 2, we prove an analogue of the Levy-Solovay theorem for Shelah cardinals, by showing that Shelah cardinals are preserved under small forcing notions. In section 3, we also prove that $(^{\kappa}\kappa\cap V)$-Shelah cardinals are preserved by Woodin's fast function forcing.

In section 4, we deal with the Laver Diamond Principle, isolated by Hamkins \cite{hamkins-diamond} as a new combinatorial axiom generalizing the classical Diamond Principle, and prove that such a principle holds for Shelah cardinals. Similar results are already obtained by Laver \cite{laver} and Gitik and Shelah \cite{Gitik-Shelah} for the other large cardinals including supercompact and strong cardinals respectively. Then in section 5 we use the already obtained Laver function for Shelah cardinals to prove an analogue of Laver's indestructibility of supercompactness theorem \cite{laver} for Shelah cardinals by proving that Shelah cardinals can be made indestructible under $\leq \kappa$-directed closed forcings of size $<wt(\kappa)$.    

The present work could be viewed as a continuation of Suzuki \cite{suzuki} and Golshani's \cite{golshani} analysis of Shelah cardinals in which they investigated the properties of \textit{witnessing number} of a Shelah cardinal and the behavior of the continuum function in the presence of such large cardinals respectively.

 \section{Shelah cardinals and small forcings}
In this section we show that Shelah cardinals are preserved under (relatively) small forcings in both upward and downward directions. In other words no Shelah cardinal loses its Shelahness in the generic extension produced by a small forcing and such a forcing doesn't add any new Shelah cardinal to the universe as well.

 \begin{theorem}\label{Small forcing and Shelah cardinals}
 Small forcing preserves Shelah cardinals in both upward and downward directions.  i.e.,  If $\kappa$ is a cardinal and $\mathbb{P}$ is a forcing notion with $|\mathbb{P}|<\kappa$ and $G\subseteq \mathbb{P}$ is a generic filter then $\kappa$ is Shelah in $V$ if and only if $\kappa$ is Shelah in $V[G]$.
 \end{theorem}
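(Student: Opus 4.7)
The plan is to treat the two directions separately, in both cases exploiting that $|\mathbb{P}|<\kappa$ implies $\mathbb{P}\in V_\kappa$, so any elementary embedding with critical point $\kappa$ fixes $\mathbb{P}$ pointwise and the standard small-forcing lifting/restriction machinery applies.

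For the upward direction ($V\Rightarrow V[G]$), let $f:\kappa\to\kappa$ in $V[G]$ with name $\dot f$. Since each value $\dot f(\check\alpha)$ is decided by at most $|\mathbb{P}|<\kappa$ conditions, the function
\[
f'(\alpha):=\sup\{\beta<\kappa:(\exists p\in\mathbb{P})\ p\Vdash\dot f(\check\alpha)=\check\beta\}+1
\]
lies in $V$, takes values in $\kappa$ by regularity, and dominates $f$ in $V[G]$. Applying Shelahness of $\kappa$ in $V$ to $f'$ produces $j:V\to M$ with $\crit(j)=\kappa$, ${}^\kappa M\subseteq M$, and $V_{j(f')(\kappa)}\subseteq M$. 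Because $j$ fixes $\mathbb{P}$, the assignment $j^*(\dot x_G):=j(\dot x)_G$ yields a lifted embedding $j^*:V[G]\to M[G]$; $\kappa$-closure of $M$ upgrades to $\kappa$-closure of $M[G]$ in $V[G]$. Elementarity applied to $f\leq f'$ gives $j^*(f)(\kappa)\leq j(f')(\kappa)$, and since $j(f')(\kappa)>|\mathbb{P}|$ every set of rank $<j(f')(\kappa)$ in $V[G]$ has a $\mathbb{P}$-name of rank $<j(f')(\kappa)$ in $V_{j(f')(\kappa)}\subseteq M$, so $V[G]_{j^*(f)(\kappa)}\subseteq V[G]_{j(f')(\kappa)}\subseteq M[G]$, as required.

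For the downward direction ($V[G]\Rightarrow V$), let $f:\kappa\to\kappa$ in $V\subseteq V[G]$ and apply Shelahness in $V[G]$ to $f$ to obtain $j^*:V[G]\to M^*$ with $V[G]_\lambda\subseteq M^*$, where $\lambda:=j^*(f)(\kappa)$. In $V[G]$ form the $(\kappa,\lambda)$-extender $E=\langle E_a:a\in[\lambda]^{<\omega}\rangle$ derived from $j^*\restricted V$, where
\[
E_a=\{X\in\mathcal{P}([\kappa]^{|a|})^V:a\in j^*(X)\}.
\]
The crucial lemma is that $E$ already lies in $V$; granting this, the ultrapower $j:V\to M:=\Ult(V,E)$ is in $V$, has $\crit(j)=\kappa$ and $j(f)(\kappa)=\lambda$, the factor map $k:M\to M^*$ is the identity on $V_\lambda$ so that $V_\lambda\subseteq M$, and $\kappa$-closure of $M$ in $V$ transfers from $\kappa$-closure of $M^*$ in $V[G]$ via $k$ and absoluteness.

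The main technical obstacle is the claim $E\in V$, which is a Levy--Solovay/Hamkins-style fact: $\kappa$-completeness of each $E_a$ combined with $\mathbb{P}$ being $\kappa$-c.c.\ of size $<\kappa$ forces every ``$X\in E_a$'' decision to be made by a $V$-antichain of size $<\kappa$, and $\kappa$-completeness aggregates these finitely-many Boolean-value choices into a genuinely ground-model ultrafilter; coherence and normality of the $E_a$'s across $a$ are then absolute between $V$ and $V[G]$. With $E\in V$ secured, the ground-model embedding $j$ witnesses Shelahness of $\kappa$ for $f$ in $V$.
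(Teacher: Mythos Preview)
Your upward direction is essentially identical to the paper's: both dominate the new function $f$ by some $g\in V$ (the paper quotes this as Lemma~\ref{dominating function lemma}), take a Shelahness witness $j$ for $g$, lift it through the small forcing, and observe $V[G]_{j^*(f)(\kappa)}\subseteq V[G]_{j(g)(\kappa)}\subseteq M[G]$.

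For the downward direction you take a genuinely different route. The paper does \emph{not} derive an extender; instead it invokes Hamkins' approximation and cover machinery (Lemmas~\ref{Hamkins' lemma for small forcing 1} and~\ref{Hamkins' lemma for small forcing 2}): since $|\mathbb{P}|^{+}$-approximation and covering hold between $V$ and $V[G]$, any Shelahness embedding $\bar j:V[G]\to\bar M$ restricts to an embedding $j:V\to M$ with $M=\bar M\cap V$ that is definable in $V$ and inherits the required closure and $V_{j(f)(\kappa)}\subseteq M$. Your approach---derive the $(\kappa,\lambda)$-extender $E$ from $j^*\restricted V$ and show $E\in V$---is the Hamkins--Woodin strategy for strong cardinals and is valid in principle, but your justification has a real gap. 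Your argument establishes that \emph{each} measure $E_a$ lies in $V$ (the $\kappa$-completeness vs.\ size $<\kappa$ argument is fine for a single $a$), yet you still need the entire map $a\mapsto E_a$, indexed by $[\lambda]^{<\omega}$ with $\lambda$ possibly much larger than $\kappa$, to lie in $V$. ``Coherence and normality are absolute'' does not give this: knowing that each value is in $V$ and that the values cohere does not by itself put the $\lambda$-length sequence into $V$. Closing this gap amounts to showing that every small approximation $E\cap z$ (for $z\in V$, $|z|\le|\mathbb{P}|$) lies in $V$, i.e.\ exactly the $|\mathbb{P}|^{+}$-approximation property---which is precisely the black box the paper invokes. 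So the paper's route is cleaner here: it cites the general lemma once rather than re-deriving its content inside the extender argument.
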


We need the following lemma from \cite{golshani} stating that for any function $f:\kappa\rightarrow \kappa$ in the generic extension some suitable \textit{dominating functions} exist in the ground model if the forcing notion satisfies some certain properties (including relative smallness).

 \begin{lemma}\label{dominating function lemma}
 If $\kappa$ is a regular cardinal in $V$ and $\mathbb{P}$ satisfies one of the following conditions, then for every function $f:\kappa\rightarrow\kappa$ in a $\PP$-generic extension of $V,$ there exists a function $g:\kappa\rightarrow\kappa$ in $V$ such that $g$ dominates $f$ ,  i.e. ,  $\forall\alpha\in \kappa~~~f(\alpha)<g(\alpha)$ .
 \begin{enumerate}
 \item[(1)] $\mathbb{P}$ is $\kappa$-c.c .
 \item[(2)] $\mathbb{P}$ is $\kappa^{+}$-distributive .
 \end{enumerate}
 \end{lemma}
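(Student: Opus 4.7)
The lemma has two independent cases, which I treat separately.

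Case (2) is essentially immediate: a $\kappa^{+}$-distributive forcing adds no new $\kappa$-sequences of ordinals, so any $f\func\kappa\to\kappa$ in $V[G]$ already lies in $V$, and the function $g(\alpha)=f(\alpha)+1$ in $V$ strictly dominates it.

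Case (1) is the substantive case and uses the standard $\kappa$-c.c.\ argument. I would fix a $\mathbb{P}$-name $\dot f\in V$ with $\Vdash_{\mathbb{P}}\dot f\func\check\kappa\to\check\kappa$. For each $\alpha<\kappa$, the set of conditions deciding a value for $\dot f(\check\alpha)$ is dense in $\mathbb{P}$, so working in $V$ I would choose a maximal antichain $A_\alpha$ among these conditions; for each $p\in A_\alpha$ let $\beta_p<\kappa$ be the unique ordinal with $p\Vdash\dot f(\check\alpha)=\check\beta_p$. The $\kappa$-c.c.\ hypothesis gives $|A_\alpha|<\kappa$, so by regularity of $\kappa$ in $V$ the ordinal
\[
g(\alpha)\;=\;\sup\{\beta_p : p\in A_\alpha\}+1
\]
remains below $\kappa$. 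Thus $g\func\kappa\to\kappa$ is definable in $V$ from $\dot f$ and the chosen antichains, hence $g\in V$. Since $G$ is a filter meeting every maximal antichain, for each $\alpha$ there is a unique $p_\alpha\in A_\alpha\cap G$, and then $f(\alpha)=\beta_{p_\alpha}<g(\alpha)$, as required.

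The argument carries no substantial obstacle: case (2) is immediate from the definition of distributivity, and case (1) is a textbook $\kappa$-chain-condition computation. The single mild subtlety is confirming $g(\alpha)<\kappa$ in case (1), which is precisely where both regularity of $\kappa$ in $V$ and the $\kappa$-c.c.\ bound on antichains are invoked; a supremum of fewer than $\kappa$ ordinals below the regular cardinal $\kappa$ stays below $\kappa$.
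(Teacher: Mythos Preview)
Your proof is correct in both cases and is the standard textbook argument. Note, however, that the paper does not actually prove this lemma: it is quoted without proof from \cite{golshani}, so there is no in-paper argument to compare against. Your treatment is exactly what one would expect such a proof to look like.
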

 We also need the following results of Hamkins \cite{hamkins3}.
 \begin{lemma} \label{Hamkins' lemma for small forcing 1}
  Let $V\subseteq\overline{V}$ be models of $\ZFC$ and let $j:\overline{V}\rightarrow\overline{M}$ be a definable elementary embedding with $crit(j)=\kappa$ and
 $M=\bigcup j[V]$ so that $j|_V$ is an elementary embedding from $V$ to $M$.  If there is a regular cardinal $\delta<\kappa$ in $\overline{V}$ such that $\overline{V}\models\;^{\delta}\overline{M}\subseteq \overline{M}$ and $\delta$-covering and $\delta$-approximation properties hold between $V$ and $\overline{V}$, then:
 \begin{enumerate}
 \item[(1)] $M=\overline{M}\cap V$ .  In particular $V\models\;^{\delta}M\subseteq M$ .
 \item[(2)] For all $A\in V$ ,  $j|_{A}\in V$ .
 \item[(3)] For all ordinal $\lambda$ ,  if $V_{\lambda}\subseteq \overline{M}$ then  $V_{\lambda}\subseteq M$ .
 \item[(4)] For all ordinal $\lambda$ ,  if $\overline{V}\models\;^{\lambda}\overline{M}\subseteq \overline{M}$ then  $V\models\;^{\lambda}M\subseteq M$ .
 \end{enumerate}
 \end{lemma}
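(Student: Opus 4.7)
The plan is to establish (1), $M=\overline{M}\cap V$, as the core identification, from which parts (3), (4), and the ``in particular'' clause of (1) fall out immediately, and to obtain (2) via $\delta$-approximation together with an auxiliary claim established during the proof of (1).

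For the easy consequences: $V_\lambda\subseteq\overline{M}$ together with $V_\lambda\subseteq V$ gives (3) directly from (1). The ``in particular'' clause of (1) and part (4) are parallel: a sequence $f\colon\delta\to M$ (respectively $f\colon\lambda\to M$) lying in $V\subseteq\overline{V}$ is also a sequence into $\overline{M}$, so the assumed $\delta$-closure (respectively $\lambda$-closure) of $\overline{M}$ in $\overline{V}$ puts $f\in\overline{M}$, and (1) then locates $f\in\overline{M}\cap V=M$.

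The substance is in (1) and (2), which I would prove together by $\in$-induction on $V$-rank $\alpha$, carrying the auxiliary statement (a) $j\restriction V_\alpha^V\in V$ alongside the nontrivial inclusions of (1) restricted to rank $\le\alpha$. The inclusion $M\subseteq\overline{M}$ is immediate from $M=\bigcup j[V]$. The base case $\alpha\le\kappa$ uses $\crit(j)=\kappa$: $j$ is the identity on $V_\kappa$, so $j\restriction V_\alpha^V=\mathrm{id}$ is in $V$, and every $y\in V_\kappa$ satisfies $y=j(y)\in j(\{y\})$, forcing $V_\kappa\subseteq M$. For the successor step, (a) at $\alpha+1$ is the heart: I would apply the $\delta$-approximation property to $j\restriction V_{\alpha+1}^V$ viewed as a subset of $V_{\alpha+1}^V\times\overline{V}$; for $z\in V$ with $|z|^V<\delta$, the intersection $(j\restriction V_{\alpha+1}^V)\cap z$ depends only on the values of $j$ at the first coordinates of $z$, and the $\delta$-covering property encloses these coordinates inside some $V_\beta^V$ with $\beta\le\alpha$, where the inductive hypothesis already gives $j\restriction V_\beta^V\in V$. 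Approximation then yields (a) at $\alpha+1$. With (a) in hand at rank $\alpha+1$, both inclusions of (1) for elements of that rank follow: an $y\in\overline{M}\cap V$ of rank $\alpha+1$ is a subset of $V_\alpha^V$ and, via $j\restriction V_\alpha^V\in V$, is realised as an element of $j(x)$ for suitable transitive $x\in V$; conversely an $y\in M$ of $\overline{M}$-rank at most $j(\alpha+1)$ lies in $j(x)$ for some transitive $x\in V$ of rank $\le\alpha+1$, and the $V$-visibility of $j\restriction x$ makes $j(x)$, and hence $y$, a member of $V$. Finally (2) for arbitrary $A\in V$ follows by choosing $\alpha$ with $A\subseteq V_\alpha^V$ and writing $j\restriction A=(j\restriction V_\alpha^V)\cap(A\times\overline{V})\in V$.

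The main obstacle is the circularity between (1) and the auxiliary statement (a): the approximation argument for $j\restriction V_{\alpha+1}^V\in V$ wants to recognise specific values $j(a)\in\overline{M}$ as members of $V$, which is exactly what (1) delivers, while (1) at rank $\alpha+1$ in turn needs (a) at $\alpha+1$. The joint induction breaks this loop because at each stage the approximation test only invokes $V$-sets of $V$-size $<\delta$, and the $\delta$-covering property confines the witnessing data to ranks strictly below $\alpha+1$, where the inductive hypothesis has already supplied everything required. This coupling of the approximation and covering properties is precisely what drives Hamkins' theorem.
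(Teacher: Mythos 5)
This lemma is not proved in the paper at all: it is quoted verbatim from Hamkins's article \emph{Extensions with the approximation and cover properties have no new large cardinals} (\cite{hamkins3}), so there is no in-paper proof to compare against, and your attempt has to be measured against Hamkins's actual argument. Your reductions of (3), (4) and the ``in particular'' clause to (1) are fine, but the core of your induction has two concrete gaps. First, the $\delta$-approximation property applies to sets $A\in\overline{V}$ all of whose elements lie in $V$ (equivalently, to subsets of a set in $V$, or to sets of ordinals); you apply it to $j\restriction V_{\alpha+1}^V$ ``viewed as a subset of $V_{\alpha+1}^V\times\overline{V}$'', but the values $j(x)$ are a priori only elements of $\overline{M}$, and whether they lie in $V$ is precisely part of what statement (1) is supposed to deliver at the much higher rank $j(\alpha+1)$ --- your inductive hypothesis at ranks $\le\alpha$ says nothing about it. Second, the $\delta$-covering property controls cardinality, not rank: a set of fewer than $\delta$ first coordinates taken from $V_{\alpha+1}^V$ may consist of elements of rank exactly $\alpha$ (e.g.\ the ordinal $\alpha$ itself), and no covering set containing them fits inside any $V_\beta^V$ with $\beta\le\alpha$. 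So the step ``covering confines the witnessing data to ranks strictly below $\alpha+1$, where the inductive hypothesis has already supplied everything'' fails, and the circularity you correctly identified is not actually broken.

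A further warning sign is that your argument for the substantive parts (1) and (2) never uses the hypothesis $\overline{V}\models{}^{\delta}\overline{M}\subseteq\overline{M}$, which is essential in Hamkins's proof and cannot be dispensed with. The known argument does not induct on rank over arbitrary sets: it first proves $j\restriction\theta\in V$ for ordinals $\theta$, where $j[\theta]$ is automatically a set of ordinals and hence a legitimate target for the approximation property, and the verification that the small approximations $j[c]\cap z$ (for $c\in V$ of size $<\delta$ obtained from covering, using $j(c)=j[c]$ since $|c|<\kappa$) lie in $V$ is the technical heart of the theorem, requiring the $\delta$-closure of $\overline{M}$ together with elementarity in a non-obvious way. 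Arbitrary sets and the identity $M=\overline{M}\cap V$ are then handled by coding with sets of ordinals. I would recommend either citing \cite{hamkins3} as the paper does, or reworking the argument along the sets-of-ordinals route with an explicit proof that the small pieces $j\restriction c$ belong to $V$.
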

 \begin{lemma}\label{Hamkins' lemma for small forcing 2}
  Let $\delta$ be a cardinal and $\mathbb{P}*\lusim{\mathbb{Q}}$ be a forcing iteration such that $\mathbb{P}$ is non-trivial ,  $|\mathbb{P}|\leq \delta$ and $\Vdash_{\mathbb{P}} $``$  \lusim{\mathbb{Q}}$ is $(\delta+1)$-strategically closed'' ,  then the $\delta^+$-covering and $\delta^+$-approximation properties hold between $V$ and $V^{\mathbb{P}*\dot{\mathbb{Q}}}$ .
 \end{lemma}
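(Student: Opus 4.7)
The plan is to factor the two-step iteration $\mathbb{P} * \dot{\mathbb{Q}}$ and treat each factor with a different tool. Specifically, I would first establish both properties between $V$ and $V^{\mathbb{P}}$ using $|\mathbb{P}| \leq \delta$ together with the non-triviality of $\mathbb{P}$, and then transfer them upward to $V^{\mathbb{P}*\dot{\mathbb{Q}}}$ by invoking the $(\delta+1)$-strategic closure of $\dot{\mathbb{Q}}$.

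For the $\delta^+$-cover property, given $X \subseteq V$ in $V^{\mathbb{P}*\dot{\mathbb{Q}}}$ with $|X| \leq \delta$, I would first note that an enumeration of $X$ is a sequence of length $\leq \delta$ of elements of $V \subseteq V^{\mathbb{P}}$; since a $(\delta+1)$-strategically closed forcing adds no such sequences, $X$ already lies in $V^{\mathbb{P}}$. Then $|\mathbb{P}| \leq \delta$ makes $\mathbb{P}$ satisfy the $\delta^+$-c.c., and a standard nice-name argument (take the union of the supports of a nice name for $X$) supplies a cover $Y \in V$ with $|Y|^V \leq \delta$. For the $\delta^+$-approximation property, assume $A \in V^{\mathbb{P}*\dot{\mathbb{Q}}}$ with $A \subseteq V$ and $A \cap x \in V$ for every $x \in V$ with $|x|^V \leq \delta$. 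Combining the hypothesis with the cover property just established for $V \subseteq V^{\mathbb{P}}$, every $x' \in V^{\mathbb{P}}$ of size $\leq \delta$ is covered by some $x \in V$ of the same size bound, whence $A \cap x' = (A \cap x) \cap x' \in V^{\mathbb{P}}$. A descending-condition construction guided by Player II's winning strategy in the $(\delta+1)$-strategic closure game then glues these $V^{\mathbb{P}}$-approximations together to produce a condition forcing $A$ to equal a definite $V^{\mathbb{P}}$-set, so $A \in V^{\mathbb{P}}$. Finally, the $\delta^+$-approximation property of $V \subseteq V^{\mathbb{P}}$ itself --- the principal technical input from Hamkins --- delivers $A \in V$.

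The main obstacle is the $\delta^+$-approximation property between $V$ and $V^{\mathbb{P}}$ for small non-trivial forcings. Unlike covering, this does not follow from any chain condition; its proof, which is the heart of Hamkins' result, requires a delicate $\mathbb{P}$-name analysis in which the non-triviality hypothesis is essential. Roughly, one argues by contradiction: given a putative counterexample $A \in V^{\mathbb{P}} \setminus V$, an incompatible pair of conditions in $\mathbb{P}$ is used to produce a size-$\leq \delta$ set $x \in V$ on which the two conditions force different values of $A \cap x$, so that no single element of $V$ can equal $A \cap x$, contradicting the approximation hypothesis. The secondary technical point --- that $(\delta+1)$-strategic closure transfers the approximation property upward from $V^{\mathbb{P}}$ --- follows the familiar descending-sequence pattern using the winning strategy and is comparatively routine, but it is where one must carefully verify that the strategically closed factor cannot smuggle in a genuinely new set whose slices all happen to lie in $V^{\mathbb{P}}$.
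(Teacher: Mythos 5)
The paper does not prove this lemma at all --- it is quoted verbatim from Hamkins \cite{hamkins3} --- so the only question is whether your argument is correct on its own terms, and it is not. The fatal step is the second half of your approximation argument: the claim that the $(\delta+1)$-strategic closure of $\dot{\mathbb{Q}}$ lets you ``glue the $V^{\mathbb{P}}$-approximations together'' and conclude that $A\in V^{\mathbb{P}}$, i.e.\ that the $\delta^+$-approximation property holds between $V^{\mathbb{P}}$ and $V^{\mathbb{P}*\dot{\mathbb{Q}}}$. This is false. Take $\dot{\mathbb{Q}}=\Add(\delta^+,1)$ as computed in $V^{\mathbb{P}}$, with conditions of size $\leq\delta$: this forcing is $\delta^+$-closed, hence far more than $(\delta+1)$-strategically closed, yet the generic subset $A\subseteq\delta^+$ it adds is new while every approximation $A\cap x$ for $x\in V^{\mathbb{P}}$ of size $\leq\delta$ lies in $V^{\mathbb{P}}$ by $\leq\delta$-distributivity. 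Your proposed descending-condition construction cannot succeed because determining $A$ on all of $\delta^+$ would require a run of the game of length $\delta^+$, whereas the strategy only survives $\delta+1$ moves. So the strategically closed factor \emph{can} ``smuggle in'' exactly the kind of set you hoped to rule out, and the step you labelled comparatively routine is in fact impossible.

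This is precisely why Hamkins' lemma is delicate: the $\delta^+$-approximation property holds between $V$ and $V^{\mathbb{P}*\dot{\mathbb{Q}}}$ even though it may fail between $V^{\mathbb{P}}$ and $V^{\mathbb{P}*\dot{\mathbb{Q}}}$, so no factorization of the approximation property through the intermediate model can work. The correct argument treats the two-step iteration as a single object: assuming a name $\dot{A}$ for a counterexample, one runs the strategy for $\dot{\mathbb{Q}}$ in the second coordinate while repeatedly using the non-triviality of $\mathbb{P}$ to find splitting pairs in the first coordinate that decide membership of some $a_\alpha$ in $\dot{A}$ differently; collecting the $a_\alpha$ into a single set $x\in V$ of size $\leq\delta$ and using that $\dot{A}\cap\check{x}$ is forced into $V$, a counting argument based on $|\mathbb{P}|\leq\delta$ yields a contradiction. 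Your sketch of the $V\subseteq V^{\mathbb{P}}$ case gestures at this splitting idea, but the essential point you miss is that the splitting in $\mathbb{P}$ and the strategy in $\dot{\mathbb{Q}}$ must be interleaved in one construction rather than handled in two separate stages. (Your covering argument, by contrast, is fine: strategic closure does give $\leq\delta$-distributivity, and the $\delta^+$-c.c.\ of $\mathbb{P}$ then supplies the cover.)
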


 Now we are ready to complete the proof of Theorem \ref{Small forcing and Shelah cardinals}.
 \\
 {\bf Proof of Theorem \ref{Small forcing and Shelah cardinals}}
 For the upward direction, suppose that $\k$ is a Shelah cardinal in $V$,  and let $f\in V[G] ,  f:\k \rightarrow \k.$ By Lemma \ref{dominating function lemma} $(1)$,  there exists $g\in V ,  g :  \k \rightarrow \k$ which dominates $f$ .  By assumption ,  there exists $j :  V \rightarrow M \supseteq V_{j(g)(\k)}$ witnessing the Shelahness of $\k$ in $V$ with respect to $g$ .  But then $j$ is easily seen to extend to some $j^* :  V[G] \rightarrow M[G] \supseteq V_{j(g)(\k)}[G],$
  and $V_{j(g)(\k)}[G] \supseteq V_{j^*(f)(\k)}[G]=V_{j^*(f)(\k)}^{V[G]}.$

For the downward direction just note that in the lemma \ref{Hamkins' lemma for small forcing 2} if we take $\mathbb{Q}$ to be trivial forcing then it follows that any forcing notion $\mathbb{P}$ satisfies $|\mathbb{P}|^+$ - approximation and covering properties between $V$ and an extension by $\mathbb{P}$. Particularly if $\kappa$ is a Shelah cardinal in the extension by a forcing notion $\mathbb{P}$ with $|\mathbb{P}|<\kappa$ and $f:\kappa\rightarrow\kappa$ is a function in the ground model, then by inaccessiblity of $\kappa$ in $V$ we have $|\mathbb{P}|^+<\kappa$. As $\mathbb{P}$ also satisfies  $|\mathbb{P}|^+$ - approximation and covering properties between $V$ and $V[G]$, using lemma \ref{Hamkins' lemma for small forcing 1} we can get the required non-trivial elementary embedding $j_{f}$ from $V$ to an inner model $M_{f}$ witnessing Shelahness of $\kappa$ with respect to $f$ in $V$ by restricting a non-trivial elementary embedding $\overline{j_{f}}$ from $V[G]$ to an inner model $\overline{M_{f}}$ witnessing Shelahness of $\kappa$ with respect to $f$ in $V[G]$.\hfill$\Box$
 
 \section{Shelah cardinals and fast function forcing}

In this section we investigate preservation of $(^{\kappa}\kappa \cap V)$-Shelah cardinals, a certain type of Shelah cardinals introduced by Gitik and Shelah \cite{Gitik-Shelah}, under Woodin's fast function forcing. As the name suggests, such a forcing adds a new $\kappa$-sequence of ordinals $<\kappa$ which in some sense is \textit{faster} than any other such $\kappa$-sequence in the ground model.

The definitions are as follows:

\begin{definition}\label{A-shelah cardinal}
If $\kappa$ is an uncountable cardinal and $A$ is a set of functions from $\kappa$ into $\kappa$ then $\k$ is called $A$-Shelah, if for every function $f\in A$ there exists an elementary embedding $j :  V \rightarrow M$ with $crit(j)=\k$ such that $^{\k}M \subseteq M$ and $V_{j(f)(\k)} \subseteq M$.
\end{definition}

\begin{definition} \label{def:fast function forcing}
Let $\k$ be a  cardinal. The fast function forcing $\mathbb{P}_\kappa$ consists of all partial functions $p: \kappa\rightarrow\kappa$ such that:
\begin{enumerate}
\item[(1)] $\forall\gamma\in dom(p)\;\;\; p[\gamma]\subseteq \gamma,$
\item[(2)] $\forall\gamma\leq \kappa$, if $\gamma$ is inaccessible then $|dom(p \upharpoonright_{\gamma})|<\gamma.$
\end{enumerate}
$\PP_\k$ is ordered by reverse inclusion.
\end{definition}
Note that for any $\gamma<\k,$ and any $p\in \PP_\k$ with $\gamma\in \dom(p),$ we can factor $\PP_\k/p$ as $\PP_\k/p \cong (\PP_\gamma/p \upharpoonright_{\gamma})\times (\PP_{[\gamma, \k)}/p \upharpoonright_{[\gamma, \k)}),$ where $\PP_{[\gamma, \k)}=\{p\in \PP_\k: \dom(p) \subseteq [\gamma, \k)\}.$

\begin{theorem}\label{Fast function forcing and Shelah cardinals}
Fast function forcing preserves $(^{\kappa}\kappa \cap V)$ - Shelah cardinals.
\end{theorem}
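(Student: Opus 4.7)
The plan is to lift a ground-model Shelah embedding through the fast function forcing. Fix $f \in {}^{\kappa}\kappa \cap V$ and let $G \subseteq \mathbb{P}_{\kappa}$ be $V$-generic; the aim is to produce an elementary embedding $j^{*}:V[G]\to N^{*}$ with $\mathrm{crit}(j^{*})=\kappa$, ${}^{\kappa}N^{*}\subseteq N^{*}$ in $V[G]$, and $V[G]_{j^{*}(f)(\kappa)}\subseteq N^{*}$. I first invoke the $({}^{\kappa}\kappa\cap V)$-Shelahness of $\kappa$ in $V$ applied to $f$ itself, or, if necessary to secure enough closure below, to a suitable dominating $g\in{}^{\kappa}\kappa\cap V$ with $g\geq f$ pointwise. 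This yields $j:V\to N$ with $\mathrm{crit}(j)=\kappa$, ${}^{\kappa}N\subseteq N$, and $V_{j(g)(\kappa)}\subseteq N$.

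Because each $p\in\mathbb{P}_{\kappa}$ has $|\mathrm{dom}(p)|<\kappa$ and therefore lies in $V_{\kappa}$, we have $j(p)=p$ and $\mathbb{P}_{\kappa}\subseteq j(\mathbb{P}_{\kappa})=\mathbb{P}^{N}_{j(\kappa)}$. Any $q\in\mathbb{P}^{N}_{j(\kappa)}$ splits uniquely as $q=q_{0}\cup q_{1}$ with $q_{0}=q\upharpoonright\kappa$ and $q_{1}=q\upharpoonright[\kappa,j(\kappa))$, and the inaccessibility conditions factor independently since every $N$-inaccessible $\gamma>\kappa$ is strong limit and absorbs the contribution of $q_{0}$ of size $<\kappa$. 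Hence $j(\mathbb{P}_{\kappa})\cong\mathbb{P}_{\kappa}\times\mathbb{Q}$ with $\mathbb{Q}:=\mathbb{P}^{N}_{[\kappa,j(\kappa))}$, and $G$ itself serves as the generic for the first factor.

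The main technical step is the construction, inside $V[G]$, of an $N[G]$-generic filter $H\subseteq\mathbb{Q}$. The tail $\mathbb{Q}$ is $\gamma_{0}$-closed in $N$, where $\gamma_{0}$ denotes the least $N$-inaccessible strictly above $\kappa$: for a decreasing sequence in $\mathbb{Q}$ of length $\mu<\gamma_{0}$ the union $q$ satisfies $|\mathrm{dom}(q\upharpoonright\gamma)|\leq\mu\cdot\sup_{\alpha}|\mathrm{dom}(q_{\alpha}\upharpoonright\gamma)|<\gamma$ at every $N$-inaccessible $\gamma\in[\gamma_{0},j(\kappa))$, since such $\gamma$ is strong limit in $N$. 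The closure ${}^{\kappa}N\subseteq N$ transfers to ${}^{\kappa}N[G]\subseteq N[G]$ in $V[G]$ by the standard name-coding argument, using that $\mathbb{P}_{\kappa}$ is $\kappa$-cc via a $\Delta$-system (inaccessibility of $\kappa$ gives $2^{<\kappa}=\kappa$). Enumerating in $V[G]$ the $N[G]$-dense subsets of $\mathbb{Q}$ and choosing the enlargement $g$ so that $\gamma_{0}$ dominates the relevant counting, a diagonalisation meeting these densities with unions at limit stages below $\gamma_{0}$ produces the required $H$.

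Finally, set $j^{*}(\tau^{G}):=j(\tau)^{G*H}$. Well-definedness and elementarity of $j^{*}:V[G]\to N[G*H]$ follow from $j[G]\subseteq G*H$, which is automatic because $j$ fixes $\mathbb{P}_{\kappa}$ pointwise and $G$ lives in the first factor. The critical point remains $\kappa$; the closure ${}^{\kappa}N[G*H]\subseteq N[G*H]$ in $V[G]$ follows by iterating the name-coding argument through the two-step extension; and $V[G]_{j^{*}(f)(\kappa)}\subseteq N[G*H]$ follows from $V_{j(f)(\kappa)}\subseteq V_{j(g)(\kappa)}\subseteq N$ together with $G\in N[G]$. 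The principal obstacle is the construction of $H$ in the third paragraph, namely controlling the size of the family of $N[G]$-dense subsets of $\mathbb{Q}$ against the $\gamma_{0}$-closure of $\mathbb{Q}$; this is exactly the point at which the restriction to $f\in{}^{\kappa}\kappa\cap V$ is essential, as it permits $j$ to be chosen in the ground model in advance of the enumeration.
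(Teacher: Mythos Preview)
Your overall strategy---lift the ground-model embedding by building a generic for the tail $\mathbb{Q}=\mathbb{P}^{N}_{[\kappa,j(\kappa))}$ and then define $j^{*}$ in the usual way---is the same as the paper's, and your factorisation $j(\mathbb{P}_{\kappa})\cong\mathbb{P}_{\kappa}\times\mathbb{Q}$ is fine. The argument breaks, however, at exactly the step you flag as ``the principal obstacle'': the construction of $H$.

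First, a concrete error. Fast function forcing is \emph{not} $\kappa$-c.c.: the conditions $p_{\alpha}=\{\langle 0,\alpha\rangle\}$ for $\alpha<\kappa$ satisfy both clauses of Definition~\ref{def:fast function forcing} and are pairwise incompatible. So the $\Delta$-system claim fails, and the ``standard name-coding argument using $\kappa$-c.c.'' you invoke to pass from ${}^{\kappa}N\subseteq N$ to ${}^{\kappa}N[G]\subseteq N[G]$ is unavailable.

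Second, and more fundamentally, the diagonalisation cannot be carried out as described. The closure of $\mathbb{Q}$ that you can actually use in $V[G]$ is only $\kappa^{+}$ (coming from ${}^{\kappa}N\subseteq N$); the $\gamma_{0}$-closure you compute is closure \emph{in $N$}, and does not transfer to $V[G]$ beyond $\kappa^{+}$. Against this, the number of dense subsets of $\mathbb{Q}$ lying in $N[G]$ is, from the point of view of $V[G]$, at least $2^{j(\kappa)}$, since $N$ contains $V_{j(g)(\kappa)}$. Your phrase ``choosing the enlargement $g$ so that $\gamma_{0}$ dominates the relevant counting'' does not help: enlarging $g$ makes $N$ contain \emph{more} of $V$, hence more dense sets, not fewer; and $\gamma_{0}$ is not a parameter you can push above $2^{j(\kappa)}$ by choice of $g$.

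The paper addresses precisely this gap by (i) taking $j$ to be an extender embedding and working below the condition $p=\{\langle\kappa,\delta\rangle\}$ with $\delta=|V_{j(g)(\kappa)}|$, and (ii) factoring $j$ through the transitive collapse $M_{0}$ of the seed hull $X=\{j(h)(\kappa,\delta):h\in V\}\prec M$. In $M_{0}$ the tail forcing below the collapsed condition $p_{0}$ is $\kappa^{+}$-closed in $V[G]$ and has only $\leq\kappa^{+}$ maximal antichains, so the diagonalisation produces an $M_{0}[G]$-generic $H_{0}$; one then pushes $H_{0}$ forward along the factor map $k$ and verifies $M[G]$-genericity via a $\delta^{+}$-distributivity argument. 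Without this factoring through a small hull (or some equivalent device for cutting down the family of dense sets), the construction of $H$ does not go through.
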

\begin{proof}
We go through a lifting argument as follows. Let $G$ be $\PP$-generic over $V$, and let $F=\bigcup G $ be the fast function added by $G$. We may note that $V[G]=V[F].$ Let $g\in V$, $g:\kappa\rightarrow\kappa$. Let $j: V \rightarrow M \supseteq V_{j(g)(\k)}$ witness the Shelahness of $\k$ in $V$ with respect to $g.$ We may also suppose that $j$ is an extender embedding, so that
\begin{center}
$M=\{j(h)(s): h\in V$ and $s\in [\delta]^{<\omega}            \},$
\end{center}
where $\delta=|V_{j(g)(\k)}|$. As $\delta$ is definable in $M$ from $\kappa$ we may also assume that:
\begin{center}
$M=\{j(h)(\kappa): h\in V\}$ 
\end{center}

Consider the condition $p=\{\langle \k, \delta \rangle   \}\in \PP^M_{j(\k)}.$ Then
$\PP^M_{j(\k)} \cong \PP_\k \times (\PP^M_{[\k, j(\k))}/p).$ Let $X=\{j(h)(\k, \delta): h\in V   \}.$ Then $X \prec M$ and we have the following commutative diagram
\begin{center}

\begin{align*}
\begin{diagram}
\node{V}
        \arrow{e,t}{j}
        \arrow{s,l}{j_0}
        \node{M}
\\
\node{M_0}
         \arrow{ne,b}{k}
\end{diagram}
\end{align*}
\end{center}
where $k: M_0 \rightarrow M$ is the inverse of the transitive collapse of $X$. Since $\k, \delta \in X,$ there is $\delta_0 < j_0(\k)$ with $k(\delta_0)=\delta.$ Let $p_0=\{\langle \k, \delta_0 \rangle   \}\in j_0(\PP_\k).$ Then $k(p_0)=p$ and $crit(k)>\k.$ We have
\begin{enumerate}
\item $j_0(\PP_\k)/p_0=\PP^{M_0}_{j_0(\k)}/p_0 \cong \PP_\k \times \PP^{M_0}_{[\k, j_0(\k))}/p_0$
\item $V[G]\models$``$\PP^{M_0}_{[\k, j_0(\k))}/p_0$ is $\k^+$-closed'',
\item $V[G]\models$``$|\{A\in M_0[G]: A$ is a maximal antichain in $ \PP^{M_0}_{[\k, j_0(\k))}   \}|\leq \k^+$''.
\end{enumerate}
So there is $H_0\in V[G]$ which is $\PP^{M_0}_{[\k, j_0(\k))}/p_0$-generic over $M_0[G],$ and clearly $j_0[G] \subseteq G\times H_0.$ So we can lift $j_0$ to get
\begin{center}
$j_0^*: V[G] \rightarrow M_0[G\times H_0].$
\end{center}
Now we lift $k$ to $M[G\times H_0].$ Since $crit(k)>\k,$ we can easily lift $k$ to
\begin{center}
$k^*:M_0[G] \rightarrow M[G].$
\end{center}
Let $H=k^*[H_0] \subseteq \PP^M_{[\k, j(\k))}/p.$
\begin{claim}
$H$ is $\PP^M_{[\k, j(\k))}/p$-generic over $M[G].$
\end{claim}
\begin{proof}
Let $D\in M[G]$ be dense open in $\PP^M_{[\k, j(\k))}/p,$ and let $\lusim{D}\in M$ be a name for it. Then $\lusim{D}=k^*(h)(s)$ for some $h\in M_0$ and $s\in [\delta]^{<\omega}.$
We may further suppose that for all $x\in \dom(h), h(x)$ is a $M_0$-name for an open dense subset of $\PP^{M_0}_{[\k, j_0(\k))}/p_0.$ Since the latter forcing is $\delta^+$-distributive in $M_0[G],$ the set $D_0=\bigcap_{x}h(x)[G]$ is dense in it, and hence $H_0\cap D_0\neq \emptyset,$ which implies $H\cap D\neq \emptyset.$
\end{proof}
Thus we can lift $k^*$ further to
\begin{center}
$k^{**}:M_0[G\times H_0] \rightarrow M[G\times H].$
\end{center}
It follows that we can lift $j$ to $j^*: V[G] \rightarrow M[G\times H],$
and clearly $(V[G])_{j(g)(\k)}\subseteq M[G\times H].$ 

Hence $j^*$ witnesses the Shelahnes of $\k$ in $V[G]$ with respect to $g$, and the theorem follows immediately.
\end{proof}
 
 \section{Shelah cardinals and Laver diamond principle}

First we need to introduce the essential notion of the \textit{witnessing number} associated to a Shelah cardinal which is a central tool for analyzing Shelah cardinals. It has been introduced by Suzuki \cite{suzuki}.

 \begin{definition}
For a Shelah cardinal $\kappa$ let $wt(\kappa)$ be the least ordinal $\lambda$ such that for any function $f:\kappa\rightarrow\kappa$ there exists an extender $E\in V_{\lambda}$ which witnesses the Shelahness of $\kappa$ with respect to $f$. We call $wt(\kappa)$ the witnessing
number of $\kappa$.
 \end{definition}

Inspired by similar definition of the Laver Diamond Principle for other large cardinals in \cite{hamkins-diamond}, the Laver Diamond Principle for a Shelah cardinal is defined as follows:

\begin{definition}
$\diamond_{\kappa}^{Shelah}$ is the assertion: there exists $l:\kappa\rightarrow V_{\kappa}$ such that for any $f: \kappa\rightarrow\kappa$ and any $\alpha < wt(\kappa)$ and $A\in V_{\alpha}$, there are $g>f$ and $j:V\rightarrow M$ witnessing the Shelahness of $\kappa$ with respect to $g$ such that $j(g)(\kappa)>\alpha$ and $j(l)(\kappa)=A$.
\end{definition}

The use of $\alpha$ and $g$ in the above definition is because we need a witnessing embedding which includes $V_{\alpha}$. It will be guaranteed by the condition $j(g)(\kappa)>\alpha$ which might not be the case for arbitrary $f$. Also the condition $g>f$ guarantees that the embedding witnessing Shelahness with respect to $g$ also witnesses Shelahness with respect to $f$.    

\begin{theorem}\label{shelah and diamond}
If $\kappa$ is a Shelah cardinal then $\diamond_{\kappa}^{Shelah}$ holds.
\end{theorem}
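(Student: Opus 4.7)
The plan is to imitate the classical Laver construction: I define the candidate Laver function $l:\kappa\to V_\kappa$ by a least-counterexample recursion, and then derive a contradiction from the failure of $\diamond_\kappa^{Shelah}$ by lifting this recursion through a Shelahness-witnessing embedding. Fix once and for all a well-ordering $<^*$ of $V$. At each stage $\gamma<\kappa$ at which $\gamma$ is itself Shelah, let $l(\gamma):=A_\gamma$, where $(f_\gamma,\alpha_\gamma,A_\gamma)$ is the $<^*$-least triple $(f,\alpha,A)$ with $f:\gamma\to\gamma$, $\alpha<wt(\gamma)$ and $A\in V_\alpha$ witnessing the failure of $\diamond_\gamma^{Shelah}$ for $l\restriction\gamma$ (that is, no $g>f$ and no $j':V\to N$ witnessing Shelahness of $\gamma$ with respect to $g$ satisfies both $j'(g)(\gamma)>\alpha$ and $j'(l\restriction\gamma)(\gamma)=A$). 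If no such counterexample triple exists, or $\gamma$ is not Shelah, set $l(\gamma):=\emptyset$.

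Assuming toward a contradiction that the constructed $l$ does not satisfy $\diamond_\kappa^{Shelah}$, let $(f,\alpha,A)$ be the $<^*$-least triple witnessing this failure at $\kappa$. I aim to exhibit $g>f$ together with a Shelahness-witnessing $j:V\to M$ for $g$ realizing $j(g)(\kappa)>\alpha$ and $j(l)(\kappa)=A$, contradicting the choice of $(f,\alpha,A)$. First I amplify $g$ past $f$ sufficiently and invoke Shelahness to produce a witnessing $j:V\to M$ for $g$ with $j(g)(\kappa)>\max(\alpha,wt(\kappa))$; such a choice is available by the usual fact that increasing the growth of $g$ only strengthens the Shelahness requirement on $j$ and, together with $\alpha<wt(\kappa)$, allows us to drive $j(g)(\kappa)$ above $wt(\kappa)$, securing $V_{wt(\kappa)+1}\subseteq M$.

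With $V_{wt(\kappa)+1}\subseteq M$ and $^{\kappa}M\subseteq M$ in hand, $\kappa$ remains Shelah in $M$, $wt(\kappa)^M=wt(\kappa)$, and every extender on $\kappa$ witnessing Shelahness in $V$ lies in $M$ and witnesses the same statement there. Consequently the predicate ``$(f',\alpha',A')$ is a counterexample for $l\restriction\kappa$ at stage $\kappa$'' is absolute between $V$ and $M$ for triples in $V_{\kappa+1}$. Applying $j$ to the recursive definition of $l$ and unfolding it at stage $\kappa$ inside $M$, the $j(<^*)$-least counterexample for $(j(l))\restriction\kappa=l\restriction\kappa$ computed inside $M$ coincides with the $<^*$-least counterexample in $V$, namely $(f,\alpha,A)$, so that $j(l)(\kappa)=A$. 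Combined with $j(g)(\kappa)>\alpha$ this contradicts the assumed failure at $(f,\alpha,A)$ and proves that $l$ does witness $\diamond_\kappa^{Shelah}$.

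The main obstacle is the absoluteness of the ``is a counterexample at stage $\kappa$'' predicate between $V$ and $M$: if $M$ were to miss any Shelahness-witnessing extender on $\kappa$ that $V$ possesses, then a triple could remain a counterexample inside $M$ even though it is defeated in $V$, breaking the transfer of the recursion. This is circumvented precisely by arranging $V_{wt(\kappa)+1}\subseteq M$, so that every extender on $\kappa$ witnessing Shelahness in $V$, which by Suzuki's definition of $wt(\kappa)$ lies in $V_{wt(\kappa)}$, is present and works identically in $M$. This is the key role of the witnessing number in making the recursion genuinely absolute.
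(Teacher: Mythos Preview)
The principal gap is your claim that one can arrange $j(g)(\kappa)>wt(\kappa)$, and hence $V_{wt(\kappa)+1}\subseteq M$. Shelahness guarantees, for each $f:\kappa\to\kappa$, an embedding $j$ with $V_{j(f)(\kappa)}\subseteq M$; from this one can obtain $j(g)(\kappa)>\alpha$ for any \emph{fixed} $\alpha<wt(\kappa)$, but your inference from ``$\alpha<wt(\kappa)$'' to ``we can drive $j(g)(\kappa)$ above $wt(\kappa)$'' is a non sequitur. Requiring $V_{wt(\kappa)+1}\subseteq M$ amounts to $\kappa$ being $(wt(\kappa)+1)$-strong, which does not follow from Shelahness alone; indeed, by Suzuki's definition the witnessing extenders may all be found in $V_{wt(\kappa)}$, and such an extender only secures $V_\beta\subseteq M$ for $\beta<wt(\kappa)$. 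Without $V_{wt(\kappa)+1}\subseteq M$ you cannot conclude that $\kappa$ is Shelah in $M$, that $wt(\kappa)^M=wt(\kappa)$, or that your recursion---which at stage $\gamma$ explicitly invokes ``$\gamma$ is Shelah'' and $wt(\gamma)$---is computed identically in $M$, so the transfer step collapses.

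A secondary problem is that you fix a well-ordering $<^*$ of all of $V$; ZFC does not supply one, and without definability $j(<^*)$ has no meaning. The paper avoids both issues. It well-orders only the set $V_{wt(\kappa)}$, and, crucially, its recursion at stage $\gamma$ makes no reference to Shelahness of $\gamma$ or to $wt(\gamma)$: it simply seeks the least $\lambda$ for which some $a\in V_\lambda$ is unanticipated by $l\restriction\gamma$. With that formulation the contradiction requires only $j(g)(\kappa)>\theta$ for the least failing level $\theta<wt(\kappa)$, which \emph{is} attainable, and it is the minimality of $\theta$ (together with agreement on $V_{\theta+1}$) that drives the argument, rather than global absoluteness of the map $\gamma\mapsto wt(\gamma)$.
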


\begin{proof}
Fix a well-ordering $\lhd$ of $V_{wt(\kappa)}$. We construct the function $l:\kappa\rightarrow V_{\kappa}$ by transfinite recursion. If $l\upharpoonright _{\gamma}$ has been defined then we define $l(\gamma)$ as follows. Let $\lambda $ be the least ordinal such that there are   $a\in V_{\lambda}$ and  $f: \gamma\rightarrow \gamma$, such that for all
$h:\gamma\rightarrow \gamma$ with $h>f$ and all $j:V\rightarrow M$ with $crit(j)=\gamma$ and $V_{j(h)(\gamma)}\subseteq M$, if $j(h)(\gamma)>\lambda$ then we have $j(l)(\gamma)\neq a$. If there is such a $\lambda$ we define $l(\gamma)=a$ for the $\lhd$-minimal $a$ with the stated property.

Now we claim that $l$ is the required $\diamond_{\kappa}^{Shelah}$ Laver function. If not, let $\theta <wt(\kappa)$ be the least ordinal such that there is  $f: \kappa\rightarrow\kappa$ and some $a\in V_{\theta}$ with the property that for all $g>f$ and all elementary embeddings $j:V\rightarrow M$ witnessing the Shelahness of $\kappa$ with respect to $g$ if $j(g)(\kappa)>\theta$ then $j(l)(\kappa)\neq a$. 

Fix a suitable function $g:\kappa\rightarrow\kappa$, $g>f$, $j(g)(\kappa)>\theta$, and its corresponding Shelahness embedding $j:V\rightarrow M$. By the choice of $g$, as we have $V_{j(g)(\kappa)}\subseteq M$ and $j(g)(\kappa)>\theta$, then $M$ and $V$ are agree on $V_{\theta +1}$. Also $M$ contains enough Shelahness extenders and will be agree with $V$ on the fact that $\theta$ is the least ordinal with the stated property, namely $a$ is not anticipated by $l=j(l)\upharpoonright \kappa$ (due to the critical point of the elementary embedding that is $\kappa$). Thus by applying the recursive definition of $l$ in M we will have $j(l)(\kappa)=a'$ for some $a'$ in $M$ that is $j(\lhd)$-minimal. 

Thus $M$ thinks ``$\theta $ is the least ordinal such that there are   $a'\in V_{\theta}$ and  $f: \k\rightarrow \k$, such that for all
$h:\k\rightarrow \k$ with $h>f$ and all extenders $E$ if $j_E(h)(\k)>\theta$ then we have $j_E(l)(\k)\neq a'$''.

But then there should be no such extender in $V$, which is not possible as $j$ itself gives such an extender, and we get a contradiction.
\end{proof}

 \section{Shelah cardinals and closed forcings}
In this section we show that Shelah cardinal $\kappa$ can be made indestructible under $\leq\kappa$-closed forcings of size $<wt(\kappa)$. The following proposition from \cite{golshani} shows that the restriction on size of the forcing is essential and so the follow up theorem is sharp.  

\begin{proposition}
If $\kappa$ is Shelah and $\lambda < wt(\kappa)$ is a regular cardinal then there is a $\lambda$-closed forcing $\mathbb{P}$ of size $wt(\kappa)$ such that $\kappa$ is no longer Shelah in $V^{\mathbb{P}}$.
\end{proposition}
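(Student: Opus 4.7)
Set $\delta = wt(\kappa)$; by standard analysis of witnessing numbers, $\delta$ is inaccessible in $V$. The natural candidate is the L\'evy collapse $\mathbb{P} = \Col(\lambda, \delta)$, whose conditions are partial functions $p : \lambda \to \delta$ of size $<\lambda$, ordered by reverse inclusion. It is $\lambda$-closed by the usual argument, and by inaccessibility of $\delta$ together with $\lambda < \delta$ we have $|\mathbb{P}| = \delta^{<\lambda} = \delta$, so both required parameters are met.

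If $\lambda < \kappa$, the verification is immediate: $\Col(\lambda, \delta)$ collapses every cardinal in the interval $(\lambda, \delta]$, in particular $\kappa$ itself, so $\kappa$ is not even a cardinal (\emph{a fortiori} not Shelah) in $V[G]$. The substantive case is $\kappa \leq \lambda < \delta$, where $\kappa$ is preserved. Here I would argue by contradiction, assuming $\kappa$ remains Shelah in $V[G]$, and exploit the minimality of $\delta = wt(\kappa)$: for each $\eta < \delta$ the definition furnishes a function $f_\eta \in V$ with $f_\eta : \kappa \to \kappa$ admitting no $V$-extender in $V_\eta$ that witnesses Shelahness with respect to $f_\eta$. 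Using the generic surjection $\lambda \twoheadrightarrow \delta$ added by $G$, one codes a cofinal family of these $f_\eta$'s into a single $f \in V[G]$ from $\kappa$ to $\kappa$, chosen so that any hypothetical Shelah-witnessing embedding $j : V[G] \to M$ for $f$ forces $j(f)(\kappa)$ to exceed every $V$-rank $<\delta$, and in particular $V_\delta^{V[G]} \subseteq M$.

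The final step is a Hamkins-style restriction: one shows $j \restricted V : V \to M \cap V$ is elementary with critical point $\kappa$, and extracts from it a $V$-extender $E^*$ of $V$-rank $<\delta$ that, by elementarity and the coding, must witness Shelahness in $V$ for one of the $f_\eta$'s in the coded family---contradicting the choice of that $f_\eta$. The main obstacle is precisely this restriction step. The direct applications of Lemma~\ref{Hamkins' lemma for small forcing 1} and Lemma~\ref{Hamkins' lemma for small forcing 2} yield only $\delta^+$-covering and approximation for $\mathbb{P}$ of size $\delta$, while a Shelah embedding $j$ only guarantees $\kappa$-closure of $M$; bridging this gap requires a finer analysis of the factored structure of $\Col(\lambda, \delta)$ above $\kappa$ and of how $V$-extenders with strength near $\delta$ lift into $V[G]$, together with a careful calibration of the coding so that the $V[G]$-witness of rank $<\delta$ descends to a $V$-witness of $V$-rank $<\delta$ for a specific $f_\eta$ in the coded family.
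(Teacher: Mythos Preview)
The paper does not prove this proposition; it is quoted from \cite{golshani}, so there is no in-paper argument to compare against. Nevertheless, your proposal has genuine gaps beyond the one you yourself flag.

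For $\lambda>\kappa$ the forcing $\Col(\lambda,\delta)$ is $\lambda$-closed and hence adds no functions $\kappa\to\kappa$; your ``coding via the generic surjection $\lambda\twoheadrightarrow\delta$'' therefore cannot manufacture a new $f:\kappa\to\kappa$, and the coding step is vacuous in this range. More seriously, nothing in your outline establishes that $\Col(\lambda,\delta)$ actually kills Shelahness when $\lambda\geq\kappa$. For $\lambda>\kappa$ this is a $\leq\kappa$-directed closed poset of precisely the sort handled by Theorem~\ref{shelah and closed}; if the ground model has already been prepared, such a collapse of size $<wt(\kappa)$ would preserve Shelahness, and you offer no feature of the size-$wt(\kappa)$ case that blocks the same lifting of ground-model witnesses. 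So it is not even clear your chosen $\mathbb{P}$ does the job.

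Even where the coding could make sense ($\lambda=\kappa$), the intended contradiction does not follow. Your $f_\eta$ is chosen so that no $V$-extender \emph{in $V_\eta$} witnesses it, but witnesses are still permitted anywhere in $V_\delta\setminus V_\eta$. Extracting from a hypothetical $V[G]$-embedding merely \emph{some} $V$-extender for \emph{some} $f_\eta$ just reconfirms that $\kappa$ is Shelah in $V$ with $wt(\kappa)\leq\delta$, which is the hypothesis, not a contradiction. You would need the extracted extender to land in $V_\eta$ for the specific $\eta$ it witnesses, and you propose no mechanism for that. And as you concede, Lemmas~\ref{Hamkins' lemma for small forcing 1}--\ref{Hamkins' lemma for small forcing 2} do not apply, since they require the forcing to be small relative to the critical point whereas here $|\mathbb{P}|=\delta>\kappa$.

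A workable argument should rather isolate a structural property that Shelahness of $\kappa$ imposes on the interval $(\kappa,wt(\kappa))$---Suzuki's analysis \cite{suzuki} of $wt(\kappa)$ is the natural source---and then exhibit a $\lambda$-closed poset of size $wt(\kappa)$ that visibly destroys that property in the extension.
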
 

In order to prove the main result first note to the following lemma from \cite{golshani} which states that without loss of generality one may assume $GCH$ in a model of Shelah cardinals. 

\begin{lemma}\label{shelah and gch}
The canonical forcing of $GCH$ preserves all Shelah cardinals. 
\end{lemma}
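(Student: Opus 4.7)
The plan is a standard lifting argument, following the template of the proof of Theorem \ref{Small forcing and Shelah cardinals} but with the additional step of constructing a generic for the tail of a lifted iteration. Let $\mathbb{P}$ denote the canonical Easton-support iteration forcing GCH, in which at each infinite cardinal $\gamma$ one forces with a $(\gamma+1)$-strategically closed poset reducing $2^\gamma$ to $\gamma^+$. Suppose $\kappa$ is Shelah in $V$ and $G$ is $\mathbb{P}$-generic over $V$. By the inaccessibility of $\kappa$ we may factor $\mathbb{P}\cong \mathbb{P}_\kappa\ast\dot{\mathbb{P}}^{\geq\kappa}$ and $G=G_\kappa\ast G^\kappa$, with $\mathbb{P}_\kappa$ of size $\kappa$ and $\kappa$-c.c., and $\dot{\mathbb{P}}^{\geq\kappa}$ forced to be $\kappa^+$-closed. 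For any $f:\kappa\to\kappa$ in $V[G]$, the $\kappa^+$-distributivity of the tail places $f$ already in $V[G_\kappa]$, and then Lemma \ref{dominating function lemma}(1) supplies a dominating $g:\kappa\to\kappa$ in $V$.

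I would next choose $j:V\to M$ witnessing Shelahness of $\kappa$ with respect to $g$, so that $\crit(j)=\kappa$, $^{\kappa}M\subseteq M$, and $V_{j(g)(\kappa)}\subseteq M$. In $M$ the iteration $j(\mathbb{P}_\kappa)$ factors as $\mathbb{P}_\kappa\ast\dot{\mathbb{R}}$ with $\dot{\mathbb{R}}$ forced to be $\kappa^+$-closed in $M$. The central construction is to build inside $V[G]$ a filter $H\subseteq\mathbb{R}$ generic over $M[G_\kappa]$; here one enumerates the maximal antichains of $\mathbb{R}$ lying in $M[G_\kappa]$ using the closure $^{\kappa}M\subseteq M$ (which bounds their number by roughly $|V_{j(g)(\kappa)}|$) and diagonalizes through them by exploiting the $\kappa^+$-closure of $\mathbb{R}$, noting that $\mathbb{P}^{\geq\kappa}$ contributes no new $\leq\kappa$-sequences that could interfere with the construction. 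This yields a first lift $j_1:V[G_\kappa]\to M[G_\kappa\ast H]$, and a further lift $j^*:V[G]\to M^*$ over the tail proceeds analogously, using $j_1[G^\kappa]$ as the seed of a master condition for $j(\dot{\mathbb{P}}^{\geq\kappa})$ and again exploiting its high degree of closure.

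Shelahness of $\kappa$ in $V[G]$ with respect to $f$ then follows because $j^*(f)(\kappa)<j(g)(\kappa)$ by the pointwise domination, and the inclusion $V_{j(g)(\kappa)}\subseteq M$ upgrades to $(V[G])_{j^*(f)(\kappa)}\subseteq M^*$ once one checks that the relevant initial segment of $G$ is reconstructible inside $M^*$ from the two lifts. The hard part I anticipate is the genericity verification for $H$: one must simultaneously bound, in $V[G]$, the number of dense subsets of $\mathbb{R}$ belonging to $M[G_\kappa]$ and ensure that enough of the $\kappa^+$-closure of $\mathbb{R}$ persists into $V[G]$ to make the diagonalization go through. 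The remaining ingredients reduce to routine extensions of the lifting techniques already deployed in Section 2.
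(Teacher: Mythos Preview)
The paper does not prove this lemma; it is quoted from \cite{golshani} without argument, serving only as input to Theorem~\ref{shelah and closed}. There is therefore no in-paper proof to compare your proposal against.

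As for the proposal itself: the overall lifting template is the right one, but the factoring you choose makes the diagonalization step unworkable as stated. You split $j(\mathbb{P}_\kappa)$ only at $\kappa$, obtaining a tail $\mathbb{R}$ that is merely $\leq\kappa$-closed (note also that $\dot{\mathbb{P}}^{\geq\kappa}$ is $\leq\kappa$-closed, not $\kappa^+$-closed as you write, since its first nontrivial stage adds a Cohen subset of $\kappa^+$), while you correctly count the dense subsets of $\mathbb{R}$ in $M[G_\kappa]$ as roughly $|V_{j(g)(\kappa)}|$. That mismatch is not a matter of additional care: with only $\leq\kappa$-closure one cannot diagonalize against $|V_{j(g)(\kappa)}|$ many dense sets. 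The actual resolution is to factor $j(\mathbb{P})$ at $j(g)(\kappa)$ as well. Since $V_{j(g)(\kappa)}\subseteq M$, the segment $\mathbb{P}_{[\kappa,\,j(g)(\kappa))}$ computed in $M$ agrees with that computed in $V$, so the genuine $V$-generic can be used there; only the remainder above $j(g)(\kappa)$ then requires a diagonalization, and there the closure is high enough to match the count coming from the extender representation of $M$. Finally, your remark that the remaining work reduces to ``routine extensions of the lifting techniques already deployed in Section~2'' is misleading: Section~2 treats small forcing, where $j(\mathbb{P})=\mathbb{P}$ and the lift is immediate, with no diagonalization involved at all.
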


Now we present our main theorem. For the proof we generally follow the method that is used in \cite{johnstone} for the case of strong cardinals. 

\begin{theorem}\label{shelah and closed}
If $\kappa$ is Shelah and $GCH$ holds then there is a set forcing extension in which the Shelahness of $\kappa$ becomes indestructible by any $\leq\kappa$-directed closed forcing of size $<wt(\kappa)$.
\end{theorem}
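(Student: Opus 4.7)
The plan is to apply a Laver-style preparation of $\kappa$ using the Laver function $l$ for Shelah cardinals produced by Theorem \ref{shelah and diamond}. Define a reverse Easton iteration $\mathbb{P}_\kappa$ of length $\kappa$ so that at each inaccessible stage $\gamma < \kappa$ for which $l(\gamma)$ names a $\leq\gamma$-directed closed poset of appropriately bounded rank we force with that poset, and otherwise we force trivially. Standard Easton bookkeeping yields that $\mathbb{P}_\kappa$ is $\kappa$-c.c., has cardinality $\kappa$, and factors nicely at every inaccessible $\gamma \leq \kappa$; by Lemma \ref{shelah and gch} we may assume GCH in $V$ throughout.

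Let $G_\kappa$ be $\mathbb{P}_\kappa$-generic over $V$ and fix in $V[G_\kappa]$ a $\leq\kappa$-directed closed forcing $\mathbb{Q}$ with $|\mathbb{Q}| < wt(\kappa)$, a $\mathbb{Q}$-generic $H$ over $V[G_\kappa]$, and a function $f : \kappa \to \kappa$ in $V[G_\kappa][H]$. Because $\mathbb{Q}$ is $\kappa^+$-distributive we have $f \in V[G_\kappa]$, and Lemma \ref{dominating function lemma} applied to the $\kappa$-c.c.\ forcing $\mathbb{P}_\kappa$ yields a dominating $g : \kappa \to \kappa$ in $V$. Applying $\diamond_\kappa^{\mathrm{Shelah}}$ to $g$, to an $\alpha < wt(\kappa)$ large enough that a canonical $\mathbb{P}_\kappa$-name $\dot{\mathbb{Q}}$ lies in $V_\alpha$, and to $A = \dot{\mathbb{Q}}$, select $g' \geq g$ and a Shelahness extender embedding $j : V \to M$ with $\crit(j) = \kappa$, $V_{j(g')(\kappa)} \subseteq M$, $j(g')(\kappa) > \alpha$, and $j(l)(\kappa) = \dot{\mathbb{Q}}$. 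In $M$ the iteration then factors as $j(\mathbb{P}_\kappa) = \mathbb{P}_\kappa * \dot{\mathbb{Q}} * \dot{\mathbb{P}}_{\mathrm{tail}}$, where $\dot{\mathbb{P}}_{\mathrm{tail}}$ is the portion strictly between stages $\kappa$ and $j(\kappa)$.

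Now execute the standard three-step lift. Step one is trivial, since $\crit(j) = \kappa$ makes $j \restriction \mathbb{P}_\kappa$ the identity. Step two builds a $\mathbb{P}_{\mathrm{tail}}$-generic $H_{\mathrm{tail}}$ over $M[G_\kappa * H]$ inside $V[G_\kappa * H]$, using the extender form $M = \{j(h)(s) : h \in V,\; s \in [\delta]^{<\omega}\}$ with $\delta = |V_{j(g')(\kappa)}|$ to bound the number of dense sets to meet, together with the high closure of $\mathbb{P}_{\mathrm{tail}}$ in $M[G_\kappa * H]$, so that a diagonalization inside $V[G_\kappa * H]$ produces $H_{\mathrm{tail}}$. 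This yields $j_1 : V[G_\kappa] \to M[G_\kappa * H * H_{\mathrm{tail}}]$. Step three lifts through $\mathbb{Q}$ by a master-condition argument: $j(\mathbb{Q})$ is $\leq j(\kappa)$-directed closed in $M[G_\kappa * H * H_{\mathrm{tail}}]$, and the image $j_1[H]$ is directed of size $|\mathbb{Q}| < wt(\kappa) \leq j(\kappa)$, hence admits a lower bound $q^*$; a generic $H^*$ for $j(\mathbb{Q})$ below $q^*$ is built inside $V[G_\kappa * H]$ by an analogous counting argument. The resulting composite $j^* : V[G_\kappa][H] \to M[G_\kappa * H * H_{\mathrm{tail}} * H^*]$ satisfies $V_{j^*(g')(\kappa)}^{V[G_\kappa][H]} \subseteq M[G_\kappa * H * H_{\mathrm{tail}} * H^*]$, and since $g' \geq g \geq f$ pointwise this bounds $V_{j^*(f)(\kappa)}^{V[G_\kappa][H]}$ inside the target, so $j^*$ witnesses Shelahness of $\kappa$ with respect to $f$ in $V[G_\kappa][H]$.

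The principal obstacle is the construction of the generics $H_{\mathrm{tail}}$ and $H^*$ inside $V[G_\kappa * H]$. Both rely simultaneously on the extender presentation of $M$ (which bounds the dense sets to be met), on the closure of the relevant tail poset in the current intermediate model, and on the hypothesis $|\mathbb{Q}| < wt(\kappa)$, which both places $\dot{\mathbb{Q}}$ inside the range targetable by $\diamond_\kappa^{\mathrm{Shelah}}$ and keeps $j_1[H]$ small enough to admit a master condition. A secondary technicality is checking that the bookkeeping of $\mathbb{P}_\kappa$ really forces $\mathbb{Q}$ at stage $\kappa$ of $j(\mathbb{P}_\kappa)$ rather than declaring it trivial; this follows the template used by Johnstone \cite{johnstone} for strong cardinals, with $wt(\kappa)$ playing the role of the strength parameter throughout.
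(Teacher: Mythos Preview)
Your overall architecture---Laver preparation via the $\diamond_\kappa^{\mathrm{Shelah}}$ function, factoring $j(\mathbb{P}_\kappa)$ as $\mathbb{P}_\kappa * \dot{\mathbb{Q}} * \mathbb{P}_{\mathrm{tail}}$, diagonalizing to get $H_{\mathrm{tail}}$---matches the paper's. The divergence, and the gap, is in your Step three.

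You lift through $\mathbb{Q}$ by a master-condition argument: take a lower bound $q^*$ of $j_1[H]$ using the $\leq j(\kappa)$-directed closure of $j(\mathbb{Q})$, then diagonalize below $q^*$. Two problems. First, the inequality $wt(\kappa)\leq j(\kappa)$ that you invoke is neither proved nor true in general for an arbitrary Shelahness extender embedding; what one \emph{can} arrange is $|\mathbb{Q}|<\alpha<j(g')(\kappa)<j(\kappa)$, which would give the needed size bound. Second, and more seriously, the directed closure of $j(\mathbb{Q})$ is a property \emph{inside} $M[j(G)]$, so to extract $q^*$ you need $j_1[H]\in M[j(G)]$. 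But $M$ is closed only under $\kappa$-sequences, while $|H|$ may be as large as $|\mathbb{Q}|$, which can be anything below $wt(\kappa)$; there is no reason $j_1[H]$ lies in $M[j(G)]$. This is exactly the obstruction that distinguishes the Shelah (or strong) case from the supercompact case, where $M$ is highly closed and the master-condition argument goes through.

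The paper handles this step differently: it shows that the filter $\langle j[g]\rangle$ generated by the pointwise image is \emph{already} $M[j(G)]$-generic. The key is the seed representation $M=\{j(i)(s):i\in V,\;s\in V_\theta\}$: given a dense open $D\in M[j(G)]$ with $D=j(i)(s)_{j(G)}$, one intersects the (at most $\kappa$ many) sets $i(x)_G$ ranging over $x\in V_\kappa$ to obtain a single dense $D_0\subseteq\mathbb{Q}$ in $V[G]$ with $j_1(D_0)\subseteq D$; genericity of $g$ over $V[G]$ then gives $j_1[g]\cap D\neq\emptyset$. This bypasses the master condition entirely, and no separate generic below a $q^*$ is ever built. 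Your sketch would be repaired by replacing Step three with this argument.
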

\begin{proof}
We define an Easton support iterated forcing of length $\kappa$ using the Laver function $l$ for Shelah cardinals as obtained in theorem \ref{shelah and diamond}. If $\mathbb{P}_{\gamma}$ for $\gamma<\kappa$ is already defined, and if $l(\gamma)$ is a $\mathbb{P}_{\gamma}$-name for a $\leq\gamma$-closed poset in $V^{\mathbb{P}_{\gamma}}$ then we define the forcing at stage $\gamma$, namely $\mathbb{Q}_{\gamma}$, to be this poset. Otherwise we define $\mathbb{Q}_{\gamma}$ to be trivial forcing. 

Now we prove that the forcing $\mathbb{P}$ is as required. Let $G\subseteq \mathbb{P}$ be a $V$-generic filter and  $\mathbb{Q}$ is a $\leq\kappa$-directed closed forcing of size $<wt(\kappa)$ in $V[G]$. We need to prove that $\kappa$ remains Shelah in $V[G][g]$, where $g\subseteq \mathbb{Q}$ is a $V[G]$-generic filter. 

Fix a function $f:\kappa\rightarrow\kappa$ in $V[G*g]$. Note that by $\kappa^+$-directed closure of $\mathbb{Q}$, it adds no $\kappa$-sequences to $V[G]$ so $f\in V[G]$. Also by $\kappa$-cc property of $\mathbb{P}$ and lemma \ref{dominating function lemma} there is a dominating function $g:\kappa\rightarrow\kappa$ for $f$ in $V$. By the size of the forcing $\mathbb{Q}$ we may assume that $\dot{\mathbb{Q}}\in V_{\theta}$ for some $\theta< wt(\kappa)$. As $l$ is a Laver function for Shelah cardinals there will be a function $h:\kappa\rightarrow\kappa$ with $h>g$ and an elementary embedding $j:V\rightarrow M$ witnessing Shelahness of $\kappa$ with respect to $h$ such that $j(h)(\kappa)>\theta$ and $j(l)(\kappa)=\dot{\mathbb{Q}}$.

Without loss of generality we may assume that $M=\{j(i)(s)~|~i:V_{\kappa}\rightarrow V, i\in V, s\in V_{\theta}\}$. It is easy to check that $j(\mathbb{P})$ and $\mathbb{P}$ are the same in the first $\kappa$- stages because $\mathbb{P}$ is defined relative to $l$ and $M[G]$ agrees that $\dot{\mathbb{Q}}$ is a name for a $\leq\kappa$-closed forcing notion. At stage $\kappa$ itself $j(\mathbb{P})$ is $\dot{\mathbb{Q}}$. So we have $j(\mathbb{P})=\mathbb{P}*\dot{\mathbb{Q}}*\mathbb{P}_{tail}$. Also we may assume that $\forall\gamma \in dom(l)~~~l[\gamma]\subseteq V_{\gamma}$, and $\mathbb{P}_{tail}$ is $\leq\beth_{\theta}$-closed in $M[G][g]$ and $\exists l'\in$ $^\kappa \kappa$ $\theta = j(l')(\kappa)$. 

Now consider the elementary embedding $j:V\rightarrow M$ witnessing Shelahness of $\kappa$ with respect to $h$ in $V$. In $V[G*g]$ we lift $j$ in two steps to obtain an elementary embedding $j:V[G]\rightarrow M[j(G)]$ and then $j: V[G*g]\rightarrow M[j(G)*j(g)]$.

\textbf{Step 1}: Lifting $j:V\rightarrow M$ to $j:V[G]\rightarrow M[j(G)]$ in $V[G*g]$.  

We need to build an $M$-generic filter $j(G)\subseteq j(\mathbb{P})$ in $V[G*g]$. As $j(\mathbb{P})=\mathbb{P}*\dot{\mathbb{Q}}*\mathbb{P}_{tail}$ and $G*g\subseteq \mathbb{P}*\dot{\mathbb{Q}}$ is $V$-generic and so $M$-generic, we only need to find suitable $M[G*g]$-generic filter $G_{tail}\subseteq \mathbb{P}_{tail}$ in $V[G*g]$ to form the required $j(G)$ as $G*g*G_{tail}$. Due to the lack of enough closure in $M[G*g]$ we work with a nice elementary substructure of it as follows:

Let $X:=\{j(f)(\kappa)~|~f:\kappa\rightarrow V~,~f\in V\}$. $X$ is an elementary substructure of $M$ and contains $\kappa$ and all what exists in $ran(j)$. Thus we have $\theta, V_{\theta}, \mathbb{P}, \dot{\mathbb{Q}}, j(\mathbb{P}), j(l), \mathbb{P}_{tail}\in X$. Consequently $\mathbb{Q}\in X[G]$ and by Tarski criterion $X[G]\prec M[G]$ and $X[G*g]\prec M[G*g]$.

It is straightforward to check that for every dense open subset $D$ of $\mathbb{P}_{tail}$ in $M[G*g]$ there is a dense subset $\overline{D}$ of $\mathbb{P}_{tail}$ in $X[G*g]$ such that $\overline{D}\subseteq D$. Based on this fact we only need to find suitable $X[G*g]$-generic filter $G_{tail}\subseteq \mathbb{P}_{tail}$ in $V[G*g]$ which will be $M[G*g]$-generic as well.

We use a diagonalization argument in order to find $X[G*g]$-generic filter $G_{tail}$. It is easy to check that $^\kappa X\subseteq X$ in $V$. By $\kappa$-cc property of $\mathbb{P}\subseteq X$ we have $^\kappa X[G]\prec X[G]$ in $V[G]$. Also from $\leq \kappa$-distributivity of $\mathbb{Q}$ it follows that  $^\kappa X[G*g]\prec X[G*g]$ in $V[G*g]$. Futhurmore $X[G*g]\prec M[G*g]$ implies that $\mathbb{P}_{tail}$ is $\leq\kappa$-closed in $X[G*g]$.

Now we need to calculate the number of maximal antichains of $\mathbb{P}_{tail}$ that exist in $X[G*g]$ from $V[G*g]$ perspective. In order to do so note that $|\mathbb{P}_{tail}|=j(\kappa)$. The forcing $\mathbb{P}*\dot{\mathbb{Q}}$ doesn't change the size of $P(j(\kappa))$ so all what we we need is to calculate the size of $P(j(\kappa))\cap X$ in $V$, that is $2^\kappa = (\kappa^+)^{V}$ by $GCH$ in $V$. Thus the number of maximal antichains of $\mathbb{P}_{tail}$ in $X[G*g]$ from $V[G*g]$ perspective would be $(\kappa^+)^{V}\leq (\kappa^+)^{V[G*g]}$. Thus there is an enumeration of these maximal antichains by a $\kappa^+$-sequence in $V[G*g]$. 

Finally by diagonalization we construct a descending $\kappa^{+}$-sequence of conditions in $X[G*g]\cap \mathbb{P}_{tail}$ which meets every antichain of $\mathbb{P}_{tail}$ that exists in $X[G*g]$. Now take $G_{tail}\subseteq \mathbb{P}_{tail}$ to be the filter generated by such a sequence. It will be $X[G*g]$-generic and so $M[G*g]$-generic and the lifting will be possible.

\textbf{Step 2}: Lifting $j:V[G]\rightarrow M[j(G)]$ to $j: V[G*g]\rightarrow M[j(G)*j(g)]$ in $V[G*g]$. 

Denote the filter generated by $j[g]$ as $<j[g]>\subseteq j(\mathbb{Q})$ which meets every dense subset $D\in ran(j)$ of $j(\mathbb{Q})$. Again it is straightforward to check that for every dense open subset of $j(\mathbb{Q})$ such as $D\in M[j(G)]$ there is a $\overline{D}\in ran(j)$ such that $\overline{D}$ is a dense subset of $j(\mathbb{Q})$ and $\overline{D}\subseteq D$. Thus $<j[g]>$ is a $M[j(G)]$-generic filter. Consequently if we consider $j(g)=<j[g]>$ then in $V[G*g]$ it would be possible to lift $j:V[G]\rightarrow M[j(G)]$ to $j: V[G*g]\rightarrow M[j(G)*j(g)]$. 

Finally we have $V_{j(f)(\kappa)}^{V[G*g]}=V_{j(f)(\kappa)}[G*g]\subseteq V_{j(h)(\kappa)}[G*g]\subseteq M[G*g]\subseteq M[j(G)*j(g)]$. Thus the lifted embedding witnesses Shelahness of $\kappa$ with respect to $f$ in $V[G*g]$.

\end{proof}

\section*{\bf Acknowledgments}
The authors would like to thank Mohammad Golshani for careful reading of the primary draft and the anonymous referee for making useful comments which led to various improvements in the text. We also would like to especially thank the journal editor, Ali Enayat, for his sincere efforts during the communications between the authors and referee.


\end{document}